\newtheorem{thm}{Theorem}[section]
\newtheorem{cor}[thm]{Corollary}
\newtheorem{lem}[thm]{Lemma}
\newtheorem{prop}[thm]{Proposition}
\theoremstyle{definition}
\newtheorem{defn}[thm]{Definition}
\newtheorem{ex}[thm]{Example}
\theoremstyle{remark}
\newtheorem{rem}[thm]{Remark}
\newtheorem{prob}[thm]{Problem}
\numberwithin{equation}{section}
\DeclareMathOperator{\Z}{\mathbb{Z}}
\begin{document}

\author[P.V. Danchev]{Peter V. Danchev}
\address{Institute of Mathematics and Informatics, Bulgarian Academy of Sciences \\ "Acad. G. Bonchev" str., bl. 8, 1113 Sofia, Bulgaria.}
\email{danchev@math.bas.bg; pvdanchev@yahoo.com}

\title[Generalizing Regularity and Strong $\pi$-Regularity] {A Generalization of Regular and Strongly $\pi$-Regular Rings}
\keywords{$\pi$-regular rings, strongly $\pi$-regular rings, regularly nil clean rings, D-regualarly nil clean rings}
\subjclass[2010]{16U99; 16E50; 16W10; 13B99}

\maketitle

\begin{abstract} We introduce and investigate the so-called {\it D-regularly nil clean rings} by showing that these rings are, in fact, a non-trivial generalization of the classical von Neumann regular rings and of the strongly $\pi$-regular rings. Some other close relationships with certain well-known classes of rings such as $\pi$-regular rings, exchange rings, clean rings, nil-clean rings, etc., are also demonstrated. These results somewhat supply a recent publication of the author in Turk. J. Math. (2019) as well as they somewhat expand the important role of the two examples of nil-clean rings obtained by \v{S}ter in Lin. Algebra \& Appl. (2018). Likewise, the obtained symmetrization supports that for exchange rings established by Khurana et al. in Algebras \& Represent. Theory (2015).
\end{abstract}

\section{Introduction and Background}

Throughout this paper, all rings are assumed to be associative and unital with identity element $1$ different to the zero element $0$. Our standard terminology and notations are mainly in agreement with \cite{L}. Exactly, for such a ring $R$, we let $U(R)$ denote the set of all units in $R$, $Id(R)$ the set of all idempotents in $R$, $Nil(R)$ the set of all nilpotents in $R$, $J(R)$ the Jacobson radical of $R$, and $C(R)$ the center of $R$. About the specific notions, we shall provide them in detail in the sequel.

Referring to \cite{G} for more information, we remember that a ring $R$ is called {\it von Neumann regular} or just {\it regular} for short if, for every $a\in R$, there is $b\in R$ such that $a=aba$. If $b\in U(R)$, the ring is said to be {\it unit-regular} (see, for instance, \cite{E} or \cite{T} as well). In particular, if $a=a^2b$ (with $ab=ba$), then $R$ is termed {\it strongly regular}. Generalizing both the notions of regular rings and local rings (that are rings for which the quotient modulo their Jacobson radical is a division ring), it was defined in \cite{N1} the so-called {\it NJ-rings} classifying them in \cite[Theorems 2.4]{N1} as those rings $R$ for which each element not in $J(R)$ is regular.

In that direction, as proper generalizations of the concepts stated above, we recollect also that a ring $R$ is called {\it $\pi$-regular} if, for each $a\in R$, there is $n\in \mathbb{N}$ depending on $a$ and having the property $a^n \in a^nRa^n$. On the same vein, we recall that a ring $R$ is said to be {\it strongly $\pi$-regular} if, for each $a\in R$, there is $n\in \mathbb{N}$ depending on $a$ and possessing the property $a^n\in a^{n+1}R\cap Ra^{n+1}$. It was established in \cite{A} that strongly $\pi$-regular rings are themselves $\pi$-regular, whereas the converse holds for abelian rings. Besides, strongly $\pi$-regular rings are always unit-regular, provided they are regular. Notice that $\pi$-regularity was generalized in two different and non-trivial ways in \cite{DS} and \cite{D2}, respectively.

On the other hand, in \cite{N2} it was introduced and intensively studied the class of {\it clean rings} $R$ as those rings for which $R=U(R)+Id(R)$, that is, for any $r\in R$, there exist $u\in U(R)$ and $e\in Id(R)$ such that $r=u+e$. If, in addition, $ue=eu$, then $R$ is termed {\it strongly clean} (see \cite{N3}). It was proved in \cite[Theorem 1]{CK} (compare also with \cite[Theorem 5]{CY}) that unit-regular rings are always clean, but in \cite[Section 2]{NS} was constructed an ingenious example of a unit-regular ring which is {\it not} strongly clean. Likewise, two fundamental examples due to G.M. Bergman (see, for example, \cite[Examples 1,2; pp.13-14]{H}) are guarantors that there exists a regular ring which is {\it not} clean (see, for instance, \cite[p.4746]{CY}) -- the reader can also consult with \cite[Example 5.12]{G} where there is an example due to Bergman of a unit-regular ring with a regular subring which is not unit-regular. Moreover, as showed in \cite{N3}, strongly $\pi$-regular rings are strongly clean, while it was shown there that there is a strongly clean ring (with non nil Jacobson radical) which is not strongly $\pi$-regular -- to specify, such an example is, in fact, a commutative construction.

By a reason of similarity, a {\it nil-clean ring} $R$ was defined in \cite{D} to be the one for which $R=Nil(R)+Id(R)$, i.e., for any $r\in R$, there are $q\in Nil(R)$ and $e\in Id(R)$ such that $r=q+e$. If, in addition, $qe=eq$, then $R$ is termed {\it strongly nil-clean}. It is quite simple to check that nil-clean rings are always clean, while the converse demonstrably fails. It was shown in \cite{DS}, however, that nil-clean rings of bounded index of nilpotence have to be strongly $\pi$-regular. Nevertheless, in \cite[Example 3.1]{S} was illustrated an example of a nil-clean unit-regular ring which is {\it not} strongly $\pi$-regular, while in \cite[Example 3.2]{S} this construction was refined to the exhibition of a nil-clean ring which is {\it not} $\pi$-regular. To author's knowledge there is no a constructive example of a nil-clean ring that is {\it not} strongly clean. Also, it is principally well known that strongly nil-clean rings are always strongly $\pi$-regular rings (compare with \cite{D}) as well as that strongly nil-clean rings were completely characterized independently in \cite{DL} and \cite{KWZ}. Note that nil-cleanness was nontrivially expanded in \cite{DS} and \cite{D0}, respectively. It is also worthwhile noticing that some stronger versions of cleanness and nil-cleanness were explored in \cite{D1}.

As a rather more general setting, in \cite{W} were introduced by using module theory the so-termed {\it exchange rings}. However, a more suitable for ring theory application is the form of Goodearl-Nicholson (see, for example, \cite{GW} and \cite{N2}) which states that a ring $R$ is exchange if, and only if, for every $r\in R$, there exists $e\in rR\cap Id(R)$ such that $1-e\in (1-r)R$ -- this relation is left-right symmetric in the sense that, for each $r\in R$, there exists $f\in Rr\cap Id(R)$ such that $1-f\in R(1-r)$. This criterion was considerably strengthened in \cite{KLN} to the following double symmetrization: {\it A ring $R$ is exchange if, and only if, for any $r\in R$, there is $e\in rRr\cap Id(R)$ such that $1-e\in (1-r)R(1-r)$}. Thus either $e\in rR\cap Id(R)$ with $1-e\in R(1-r)$, or $e\in Rr\cap Id(R)$ such that $1-e\in (1-r)R$. Exchange rings contain both the classes of $\pi$-regular rings (see, e.g., \cite[Example 2.3]{Sto}) and clean rings (see, e.g., \cite{N2}), and so they definitely substitute a quite large class of rings. However, all abelian exchange rings are clean (see \cite{N2}).

On the other side, as substantial extension of the aforementioned $\pi$-regular rings, it was recently defined in \cite{D2} the class of so-called {\it regularly nil clean rings} as those rings $R$ for which, for any $r\in R$, there exists $e\in Rr\cap Id(R)$ with the property that $r(1-e)\in Nil(R)$ (or, equivalently, $(1-e)r\in Nil(R)$. It was given in \cite[Proposition 1.3]{D2} the left-right symmetric property, namely that there exists $f\in rR$ with the property that $r(1-f)\in Nil(R)$ (or, in an equivalent form, $(1-f)r\in Nil(R)$. It was established also in \cite[Proposition 2.4]{D2} that regularly nil clean rings are themselves exchange and that $\pi$-regular rings are themselves regularly nil clean, thus extending the aforementioned result from \cite{Sto}. Since in the abelian case all regularly nil clean rings were strongly $\pi$-regular, there is a clean ring which is {\it not} regularly nil clean. 

\medskip

So, inspired by the validity of the lastly presented facts, we arrive quite naturally at our basic tool, which actually was originally stated into consideration in \cite[Problem 3.1]{D2} searching for the increasing property of the existing idempotent like this $e\in rRr$.

\begin{defn}\label{1} A ring $R$ is said to be {\it double regularly nil clean} or just {\it D-regularly nil clean} for short if, for each $a\in R$ there exists $e\in (aRa)\cap Id(R)$ such that $a(1-e)\in Nil(R)$ (and hence that $(1-e)a\in Nil(R)$). If, in addition, there is a fixed positive integer $k$ such that $[a(1-e)]^k=0$, then we shall say that the D-regularly nil clean ring has {\it index at most $k$}. (This supplies that $[(1-e)a]^{k+1}=0$.)
\end{defn}

Clearly, D-regularly nil clean rings are always regularly nil clean. Reformulating \cite[Problem 3.1]{D2}, an actual question is of whether or not the properties of being regularly nil clean and D-regularly nil clean are independent of each other, i.e., does there exist a regularly nil clean ring that is {\it not} D-regularly nil clean.

Obvious examples of D-regularly nil clean rings are the strongly regular rings, that are, subdirect products of division rings, as well as the local rings with nil Jacobson radical. Even more, strongly regular rings are D-regularly nil clean with index $1$; it is hopefully that the converse will eventually hold. In fact, $a(1-e)=0$ for some $e=aba$ with $b\in R$ yields that $a=ae=a^2ba\in a^2R$. However, it is not obvious whether (or eventually not) we will have that $a\in Ra^2$. Some other kinds of non-trivial examples of such type of rings will be given below.

\medskip

Our work is structured as follows: In the next section, we state and prove our major results (see, respectively, Theorem~\ref{strpireg} as well as Examples~\ref{cur1} and \ref{cur2} listed below). We finish with some useful commentaries treating the more insightful exploration of the current subject, and we also pose some left-open problems.

\section{Preliminary and Main Results}

We begin here with the following technicality, which could be useful for further applications.

\begin{lem}\label{reduct} Suppose that $R$ is a ring. Then $R$ is D-regularly nil clean if, and only if, $R/J(R)$ is D-regularly nil clean and $J(R)$ is nil.
\end{lem}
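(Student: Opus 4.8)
The plan is to treat the two implications separately, with almost all the work concentrated in the converse direction. For the forward implication, suppose $R$ is D-regularly nil clean. To see that $J(R)$ is nil, I would take $a\in J(R)$ and apply the definition to obtain $e\in (aRa)\cap Id(R)$ with $a(1-e)\in Nil(R)$. Since $J(R)$ is a two-sided ideal, $e\in aRa\subseteq J(R)$; but any idempotent $e\in J(R)$ satisfies $e(1-e)=0$ with $1-e\in U(R)$, forcing $e=0$, whence $a=a(1-e)\in Nil(R)$. To see that $R/J(R)$ is D-regularly nil clean, I would simply reduce a witness modulo $J(R)$: idempotents map to idempotents and nilpotents to nilpotents, and $\bar e\in \bar a\,(R/J(R))\,\bar a$, so $\bar e$ witnesses the property for $\bar a$.

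For the converse, assume $R/J(R)$ is D-regularly nil clean and $J(R)$ is nil. Given $a\in R$, the quotient supplies an idempotent $\bar g\in \bar a\,(R/J(R))\,\bar a$ with $\bar a(1-\bar g)\in Nil(R/J(R))$. Writing $\bar g=\overline{axa}$ for some $x\in R$, I would set $c=axa\in aRa$; then $\bar c=\bar g$ is idempotent, so $w:=c-c^2\in J(R)$ is nilpotent. The crucial step is to lift $\bar g$ to a genuine idempotent of $R$ that \emph{still lies in} $aRa$, and this is the one point I expect to be genuinely delicate, since an arbitrary idempotent lift modulo the nil ideal $J(R)$ need not remain inside the corner-like set $aRa$.

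To overcome this I would use an explicit polynomial lifting rather than an abstract one. Starting from $c$, iterate the map $f(t)=3t^2-2t^3$, using the identity $f(t)-f(t)^2=(t-t^2)^2(3-2t)(2t+1)$, so that the defect $f(c)-f(c)^2$ is a multiple of $w^2$; iterating, the defect of $f^{(N)}(c)$ is a multiple of $w^{2^N}$, which vanishes once $2^N$ exceeds the nilpotency index of $w$. Two features make this succeed: each iterate of $f$ is a polynomial in $c$ with zero constant term, and since $c\in aRa$ forces $c^k\in aRa$ for every $k\ge 1$ while $aRa$ is closed under addition, every such polynomial lies in $aRa$; moreover $f$ fixes idempotents, so $\bar e=f^{(N)}(\bar g)=\bar g$, where $e:=f^{(N)}(c)\in (aRa)\cap Id(R)$.

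Finally, with $e\in (aRa)\cap Id(R)$ and $\bar e=\bar g$ in hand, I would check the nilpotence condition: $\overline{a(1-e)}=\bar a(1-\bar g)\in Nil(R/J(R))$, so $\bigl(a(1-e)\bigr)^m\in J(R)$ for some $m$, and since $J(R)$ is nil this makes $a(1-e)$ nilpotent. Thus $e$ witnesses that $a$ is D-regularly nil clean, completing the argument. Everything apart from the idempotent-lifting-within-$aRa$ step is a routine transfer of idempotents and nilpotents across the quotient by the nil ideal $J(R)$.
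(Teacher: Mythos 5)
Your proof is correct and follows essentially the same route as the paper's: the paper's converse direction rests on a stated ``folklore fact'' that modulo a nil ideal an idempotent $d+I$ lifts to an idempotent in $Id(P)\cap dPd$, which is precisely the lifting-within-$aRa$ step you carry out explicitly via the iteration $f(t)=3t^2-2t^3$ applied to $c=axa$. The only difference is one of completeness, not of method: you supply the details of that lifting and of the forward direction, both of which the paper delegates to a citation or to folklore.
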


\begin{proof} Before beginning to prove the statement, we need the following folklore fact:

\medskip

{\it If $P$ is a ring with a nil-ideal $I$ and if $d\in P$ with $d+I\in Id(P/I)$, then $d+I=e+I$ for some $e\in Id(P)\cap dPd$ such that $de=ed$}.

\medskip

The left-to-right implication being valid in the same manner as in \cite[Theorem 2.9]{D2}, we will deal with the right-to-left one. So, given an arbitrary element $a$ of $R$, there exists $b+J(R)\in Id(R/J(R))\cap (a+J(R))(R/J(R))(a+J(R))$ with $(a+J(R))(1+J(R)-(b+J(R))\in Nil(R/J(R))$. Consequently, bearing in mind the above folklore fact, there is $r\in R$ such that $b+J(R)=(a+J(R))(r+J(R))(a+J(R))=ara+J(R)=e+J(R)$ for some $e\in Id(R)\cap (ara)R(ara)\subseteq Id(R)\cap aRa$. Furthermore,

$$
(a+J(R))(1+J(R)-(e+J(R)))=(a+J(R))(1-e+J(R))=
$$

\medskip

$$
=a(1-e)+J(R)\in Nil(R/J(R))
$$

\medskip

\noindent and, therefore, there exists $m\in \mathbb{N}$ having the property that $[a(1-e)]^m\in J(R)\subseteq Nil(R)$. This means that $a(1-e)\in Nil(R)$, as required.
\end{proof}

Although it has been long known that the center of an exchange ring need not to be again exchange, the following statement is somewhat curious even in the light of \cite[Proposition 2.7]{D2}.

\begin{prop} The center of a D-regularly nil clean ring is again a D-regularly nil clean ring.
\end{prop}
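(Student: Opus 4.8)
The plan is to start from an arbitrary $a\in C(R)$ and to manufacture a \emph{central} idempotent witnessing that $a$ is D-regularly nil clean inside $C(R)$. Applying Definition~\ref{1} to $a$ viewed as an element of $R$, I obtain $e\in (aRa)\cap Id(R)$ with $a(1-e)\in Nil(R)$; writing $e=ara$ and using that $a$ is central, this becomes $e=a^2r$ for some $r\in R$. Since $a$ is central it commutes with $e$, so $[a(1-e)]^k=a^k(1-e)$ for every $k$, and hence the nilpotence of $a(1-e)$ forces $a^k(1-e)=0$, i.e. $a^k=a^ke=a^{k+2}r$, for some positive integer $k$. Multiplying this relation repeatedly by $a^2r$ (and using centrality of $a$) yields the stabilized identity $a^k=a^{k+2m}r^m$ for every $m\geq 0$.

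The heart of the argument is to replace $e$ by a genuinely central idempotent, and this is also the step I expect to be the main obstacle, since neither $e$ nor the naive idempotent $a^k r^k$ need commute with all of $R$. The mechanism is the following: because $a^{k+2}r=a^k$ is central, it commutes with every $x\in R$, which after cancelling the central factor $a^{k+2}$ gives $a^{k+2}[r,x]=0$ for all $x$. Expanding $[r^m,x]=\sum_{i=0}^{m-1}r^i[r,x]r^{m-1-i}$ and pulling the central power of $a$ through each $r^i$, I get $a^p[r^m,x]=0$ whenever $p\geq k+2$. Thus a sufficiently high power of $a$ annihilates the obstruction to centrality of any monomial of the form $a^p r^m$.

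Now I would fix $m$ large (for instance $m=k+2$) and put $g=a^{2m}r^m$. Multiplying the stabilized identity $a^k=a^{k+2m}r^m$ on the right by $a^{2m-k}r^m$ and using centrality of $a$ shows $a^{2m}r^m=a^{4m}r^{2m}$, that is $g^2=g$; the commutator estimate with $p=2m\geq k+2$ shows that $g$ is central; and writing $g=a^2c$ with $c=a^{2m-2}r^m$, the same estimate with $p=2m-2\geq k+2$ shows $c\in C(R)$, so that $g\in \big(a\,C(R)\,a\big)\cap Id(C(R))$. Finally $a^k g=a^{k+2m}r^m=a^k$, whence $a^k(1-g)=0$ and therefore $[a(1-g)]^k=a^k(1-g)=0$, giving $a(1-g)\in Nil(C(R))$. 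This exhibits the required central idempotent and proves that $C(R)$ is D-regularly nil clean; since the exponent $k$ is inherited unchanged, the very same argument shows that the center of a D-regularly nil clean ring of index at most $k$ is again of index at most $k$.
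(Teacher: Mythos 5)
Your proof is correct, and while it follows the same two-step skeleton as the paper's argument --- produce an idempotent witnessing D-regular nil cleanness of $a$ in $R$, show it is central, and then exhibit it in the form $a\cdot(\text{central element})\cdot a$ --- the mechanism you use is genuinely different. The paper observes that the original idempotent $e=a^2r$ already is central, by a short annihilation argument: since $e=e^k\in a^kR$ and $a^k(1-e)=0$, one gets $eR(1-e)\subseteq a^kR(1-e)=Ra^k(1-e)=0$ and likewise $(1-e)Re=0$, whence $ex=exe=xe$ for all $x$; it then fixes the witness by replacing $b$ (where $e=a^2b$) with $y=be$, proving $y$ central by the same trick. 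You instead run a commutator calculus: from centrality of $a^{k+2}r=a^k$ you extract $a^{k+2}[r,x]=0$, expand $[r^m,x]=\sum_{i=0}^{m-1}r^i[r,x]r^{m-1-i}$ to conclude that every element $a^pr^m$ with $p\geq k+2$ is central, and then take $m=k+2$, $g=a^{2m}r^m$, $c=a^{2m-2}r^m$. Note, incidentally, that your $g$ is not actually a new idempotent: since $a$ is central, $g=a^{2m}r^m=(a^2r)^m=e^m=e$, so you have re-proved centrality of the very same element by a longer, purely computational route (one small slip in wording: nothing is ``cancelled'' in deriving $a^{k+2}[r,x]=0$; the central factor $a^{k+2}$ is merely moved past $x$, which is exactly what your displayed identity says). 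What your version buys is self-containedness at the level of elementary commutator identities, plus the explicit observation that the nilpotence index $k$ is preserved, so the center of a D-regularly nil clean ring of index at most $k$ again has index at most $k$; the paper's proof yields this too (the same $e$ and exponent work), but does not state it.
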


\begin{proof} Letting $R$ be such a ring and given $c\in C(R)$, we may write that $(c(1-e))^m=c^m(1-e)=0$ for some $e\in Id(R)\cap cRc=c^2R$. What suffices to prove is that $e\in C(R)$. To do that, for all $r\in R$, it must be that $er(1-e)\in c^mR(1-e)=Rc^m(1-e)=0$ as $e\in c^2R$ implies at once that $e=e^m\in c^mR$. Thus $er=ere$ and, by a reason of similarity, we also have that $re=ere$. Hence, it is now immediate that $er=re$, proving the claim about the centrality of $e$.

What remains to be shown is just that $e\in cC(R)c=c^2C(R)$. Indeed, write $e=c^2b$ for some $b\in R$. This forces that $e=c^2be=c^2y$, where $y=be=eb$ as $e$ is central. We claim that $y\in C(R)$, as needed. In fact, for any $z\in R$, one derives that $yz(1-e)=(1-e)yz=(1-e)ebz=0$ and that $(1-e)zy=zy(1-e)=zbe(1-e)=0$, because $1-e\in C(R)$, which tells us that $yz=yze$ and $zy=ezy$. Further, $yz=yzc^2y=c^2yzy$ and $zy=c^2yzy$ and, finally, $yz=zy$, as claimed.
\end{proof}

It was proved in \cite{N2} that the corner subring of any exchange ring is again exchange as well as in \cite{D2} that the corner subring of any regularly nil clean ring is again regularly nil clean. The next statement parallels to these two assertions, and is also in sharp contrast with the case of clean rings (see \cite{S0}).

\begin{prop}\label{corner} If $R$ is a D-regularly nil clean ring, then so is the corner ring $eRe$ for any $e\in Id(R)$. In particular, if $\mathbb{M}_n(R)$ is D-regularly nil clean, then so does $R$.
\end{prop}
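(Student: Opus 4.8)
The plan is to verify the D-regularly nil clean condition directly inside the corner ring $eRe$, whose identity is $e$.  Given an arbitrary element $a\in eRe$ (so $a=eae$ and $ea=ae=a$), I would first apply the D-regularly nil clean hypothesis on $R$ to produce an idempotent $f\in (aRa)\cap Id(R)$ with $a(1-f)\in Nil(R)$.  The natural candidate for the witnessing idempotent in the corner ring is $g:=efe$, and the first task is to show that $g$ is genuinely an idempotent of $eRe$.  Since $f\in aRa$, write $f=axa$ for some $x\in R$; then, using $a=ea=ae$, one gets $f=a(exe)a$ and hence $ef=fe=f$, because $f$ is absorbed by $e$ on both sides through $a$.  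Consequently $g=efe=f$, so in fact the same idempotent $f$ already lies in $eRe$ and satisfies $f\in a(eRe)a$ (rewriting $f=a(exe)a$ with $exe\in eRe$).  This is the cleanest route: $f$ survives untouched in the corner.

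With $f\in (a(eRe)a)\cap Id(eRe)$ in hand, the next step is the nilpotence condition \emph{relative to the corner ring}, whose identity is $e$ rather than $1$.  The element to control is $a(e-f)$, and I would observe that $a(e-f)=ae-af=a-af=a(1-f)$, the last equality holding because $af=f$ forces $a\cdot 1\cdot=a$ and $a(1-f)=a-af$ — more precisely $a(e-f)=a-af$ while $a(1-f)=a-af$, so the two coincide.  Since $a(1-f)\in Nil(R)$ by hypothesis, and nilpotence is an absolute notion (a nilpotent element of $R$ lying in $eRe$ is nilpotent in $eRe$ with the same vanishing power), we conclude $a(e-f)\in Nil(eRe)$.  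This establishes that $eRe$ is D-regularly nil clean.

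The main obstacle, and the point deserving the most care, is the bookkeeping with the two different identities: one must consistently replace $1$ by $e$ when working inside the corner, and verify that the containment $f\in aRa$ can be upgraded to $f\in a(eRe)a$ rather than merely $f\in aRa$.  The key leverage is that $a=eae$ absorbs the corner idempotent $e$ on both sides of any product $axa$, turning the middle factor $x$ into $exe\in eRe$ automatically; this is what makes the witness descend to the corner without modification.  Once the corner case is secured, the matrix statement follows immediately: if $\mathbb{M}_n(R)$ is D-regularly nil clean, then taking $e$ to be the diagonal idempotent with a single $1$ in the top-left entry yields $e\,\mathbb{M}_n(R)\,e\cong R$, so $R$ inherits the property by the first part.
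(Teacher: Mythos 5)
Your proposal is correct and follows essentially the same route as the paper: apply the hypothesis in $R$ to $a=eae$, observe that any $f\in aRa$ is automatically absorbed by $e$ (so $f=efe\in a(eRe)a\cap Id(eRe)$), identify $a(e-f)=a(1-f)$, and transfer nilpotence to the corner, then realize $R$ as the corner $E_{11}\mathbb{M}_n(R)E_{11}$ for the matrix statement. The only blemish is the garbled clause ``because $af=f$'' (false in general), but it is harmless since the equality $a(e-f)=ae-af=a-af=a(1-f)$ is correctly justified by $ae=a$ in the same sentence.
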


\begin{proof} Choose an arbitrary element $ere\in eRe$ for some $r\in R$. Since $ere\in R$, it follows that there is an idempotent $f$ in $R$ with $f\in (ere)R(ere)$ such that $(1-f)ere\in Nil(R)$. But this could be written as $ere-fere=(e-f)ere=(e-fe)ere=q\in Nil(R)$. Thus $(e-efe)ere=eq=eqe\in Nil(eRe)$ with $efe\in Id(eRe)\cap (ere)(eRe)(ere)=Id(eRe)\cap (ere)R(ere)$, because $efe=f$ and $qe=q$ so that $eq\in Nil(R)$, as expected.

The second part-half appears to be a direct consequence of the first part-half.
\end{proof}

An important but seemingly rather difficult problem is the reciprocal implication of the last assertion, namely if both $eRe$ and $(1-e)R(1-e)$ are D-regularly nil clean rings, does the same hold for $R$ too? Note that for both exchange rings and clean rings this implication is true (see, e.g., \cite{N2} and \cite{HN}, respectively), although for regular and unit-regular rings this is not so (see \cite{G}).

\medskip

The next two tricker technicalities are pivotal.

\begin{lem}\label{reciproc} If $R$ is a ring and $x,y\in R$ with $x=xyx$, then for the element $y':=yxy$ the following two relations are fulfilled:

\medskip

(*) ~ $x=xy'x$;

\medskip

(**) ~ $y'=y'xy'$.
\end{lem}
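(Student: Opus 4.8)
The plan is to prove both identities by straightforward substitution of $y' := yxy$ into the relevant products and then repeatedly invoking the single hypothesis $x = xyx$ to collapse the resulting words in $x$ and $y$. No structural ring-theoretic input is needed beyond associativity; the entire content is the bookkeeping of \emph{where} to contract an occurrence of the pattern $xyx$.

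For (*), I would compute
$$x y' x = x(yxy)x = xyxyx.$$
Reading $xyxyx$ as $(xyx)\,yx$ and substituting $xyx = x$ gives $x\cdot yx = xyx = x$, which is exactly (*). (Equivalently one may contract the trailing $xyx$ first, i.e. $xy(xyx) = xy\cdot x = x$; either grouping lands on $x$.)

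For (**), I would substitute once more and reduce in stages:
$$y' x y' = (yxy)\,x\,(yxy) = yxyxyxy.$$
First I would contract the opening block, $yxyx = y(xyx) = yx$, so the word becomes $yx\cdot yxy = yxyxy$; then the three central letters form $xyx = x$, whence $yxyxy = y(xyx)y = yxy = y'$. This establishes (**).

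The only place any care is required is in selecting which occurrence of $xyx$ to contract at each step, since the same reduced word is reachable by several routes; there is thus no genuine obstacle, only a risk of miscounting letters. I would close by remarking, for context, that this is the classical passage from an arbitrary inner quasi-inverse $y$ of $x$ to a \emph{reflexive} one $y'$, so that (*) and (**) together say precisely that $x$ and $y'$ are mutual reflexive generalized inverses; in particular $xy'$ and $y'x$ are then idempotents, which is presumably the reason $y'$ is preferable to $y$ in the later applications of this lemma.
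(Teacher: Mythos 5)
Your proof is correct and follows essentially the same route as the paper: both arguments expand $y'=yxy$ and repeatedly contract occurrences of $xyx$ to $x$, with the same choice of contractions at each stage. The closing remark about $y'$ being a reflexive generalized inverse (so that $xy'$ and $y'x$ are idempotent) is accurate and indeed explains how the lemma is used later, e.g.\ in Proposition~\ref{twodeg}.
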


\begin{proof} About the first relationship, $xy'x=x(yxy)x=(xyx)yx=xyx$. As for the second one, $y'xy'=(yxy)x(yxy)=y(xyx)yxy=y(xyx)y=yxy=y'$, as promised.
\end{proof}

It is worthwhile noticing that in \cite{D2} it was showed that if $a$ is a $\pi$-regular element, that is, $a^n$ is regular for some $n\in \mathbb{N}$, then $a$ is regularly nil clean, too. Nevertheless, this cannot be happen in the situation of D-regular nil cleanness. Specifically, the following is valid:

\begin{prop}\label{twodeg} If $R$ is a ring having an element $a$ such that $a^n$ is regular for some $n\geq 2$, then $a$ is D-regularly nil clean of index not greater than $n$.
\end{prop}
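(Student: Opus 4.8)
The plan is to manufacture, from a reflexive inner inverse of $a^n$, an explicit idempotent that lies in $aRa$ and whose complementary tail $a(1-e)$ is nilpotent of index exactly $n$. First I would normalize the hypothesis: since $a^n$ is regular there is $b_0\in R$ with $a^n=a^nb_0a^n$, and applying Lemma~\ref{reciproc} with $x=a^n$ and $y=b_0$ I may replace $b_0$ by $b:=b_0a^nb_0$, so that now both $a^nba^n=a^n$ and $ba^nb=b$ hold. Having the reflexive relation $ba^nb=b$ on hand is precisely what will keep the idempotent below genuinely idempotent.

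The candidate is $e:=a^{n-1}ba$. Idempotency is immediate from reflexivity, since $e^2=a^{n-1}b(a\cdot a^{n-1})ba=a^{n-1}(ba^nb)a=a^{n-1}ba=e$. The conceptual point, and the reason the hypothesis $n\ge 2$ enters, is membership in $aRa$: the idempotents that attach naturally to the regularity of $a^n$, namely $a^nb$ and $ba^n$, only lie in $a^nR$ and in $Ra^n$ respectively, i.e.\ they are merely one-sided, which is what \cite{D2} already exploited for ordinary regular nil cleanness. Here instead I peel one factor of $a$ off each end and write $e=a\,(a^{n-2}b)\,a$, which visibly belongs to $aRa$ exactly because $n-1\ge 1$. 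This is the step I expect to be the main obstacle: reconciling two-sided ($aRa$) membership with idempotency simultaneously, rather than settling for a one-sided idempotent; the reflexive normalization and the peeling-off trick are what resolve it together.

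It then remains to control the nilpotency together with its index. Setting $t:=a(1-e)=a-ae=a-a^nba$, I would establish by induction on $k$ the clean identity
\[
t^{k}=a^{k}-a^{n}b\,a^{k},\qquad k\ge 1 .
\]
In the inductive step one multiplies $t^{k}=a^{k}-a^nba^{k}$ on the right by $t=a-a^nba$; the only non-obvious summand is the term carrying two factors of $b$, namely $a^nb\,a^{k+n}b\,a$, which collapses to $a^{k+n}ba$ upon splitting $a^{k+n}=a^{n}a^{k}$ and invoking $a^nba^n=a^n$. This exactly cancels the $-a^{k+n}ba$ summand and returns $a^{k+1}-a^nba^{k+1}$, closing the induction. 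Taking $k=n$ and using regularity one last time yields $t^{n}=a^{n}-a^{n}b\,a^{n}=a^{n}-a^{n}=0$, so that $[a(1-e)]^{n}=0$ and $a$ is D-regularly nil clean of index at most $n$; the companion relation $[(1-e)a]^{n+1}=0$ then follows formally, as recorded in Definition~\ref{1}.
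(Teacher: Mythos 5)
Your proof is correct and takes essentially the same approach as the paper: normalize $b$ to a reflexive inner inverse via Lemma~\ref{reciproc}, exhibit an explicit idempotent in $aRa$ (you take $e=a^{n-1}ba$, the paper takes its mirror image $e=aba^{n-1}$), and check $[a(1-e)]^n=0$. The only difference is that you write out the inductive verification of the nilpotency, which the paper leaves to the reader as a ``direct inspection.''
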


\begin{proof} Writing $a^n=a^nba^n$ for some existing $b\in R$, then with Lemma~\ref{reciproc} at hand, we may also write that $b=ba^nb$. Indeed, setting $b'=ba^nb$, by consulting with the cited lemma we will have that $a^n=a^nb'a^n$ and that $b'=b'a^nb'$, so that without loss of generality we could replace $b'$ via $b$. Furthermore, letting $e:=aba^{n-1}$, we easily check that $e\in Id(R)\cap (aRa)$. By a direct inspection, which we definitely leave to the reader, one verifies that $[a(1-e)]^n=0$, as expected.
\end{proof}

We are now ready to proceed by proving two of the major assertions motivated the writing of this article.

\begin{thm}\label{reg} Regular rings are D-regularly nil clean of index $2$.
\end{thm}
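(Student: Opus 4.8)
The plan is to reduce the statement to an element-wise verification and then invoke Proposition~\ref{twodeg}. Recall that a ring $R$ is von Neumann regular precisely when every element $a \in R$ is regular, i.e.\ $a = aba$ for some $b \in R$. To show $R$ is D-regularly nil clean of index $2$, I must produce, for each $a \in R$, an idempotent $e \in (aRa) \cap Id(R)$ with $[a(1-e)]^2 = 0$. The natural idea is to exploit the fact that in a regular ring not only is $a$ regular, but so is every power of $a$ (since regular rings are $\pi$-regular and indeed closed under taking powers in the relevant sense). This suggests applying Proposition~\ref{twodeg} with $n = 2$.

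First I would observe that in a von Neumann regular ring, $a^2$ is regular for \emph{every} $a \in R$. This is the crux: although regularity gives $a = aba$ directly, I need regularity of the \emph{square} $a^2$ to feed into Proposition~\ref{twodeg} at $n = 2$. Since $R$ is regular, every element is regular, so in particular $a^2$ is regular; writing $a^2 = a^2 c a^2$ for some $c \in R$ meets the hypothesis of Proposition~\ref{twodeg} with $n = 2 \geq 2$. That proposition then immediately yields that $a$ is D-regularly nil clean of index not greater than $2$, i.e.\ there is $e \in Id(R) \cap (aRa)$ with $[a(1-e)]^2 = 0$.

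The remaining point is to confirm the index is exactly $2$ rather than possibly $1$, or at least that the uniform bound $k = 2$ holds across all elements so that the ring qualifies as D-regularly nil clean of index $2$ in the sense of Definition~\ref{1}. Since Proposition~\ref{twodeg} already delivers a \emph{fixed} exponent $n = 2$ working simultaneously for every $a$ (the construction $e := aba^{n-1} = aba$ and the bound $[a(1-e)]^2 = 0$ depend only on the choice $n = 2$, not on $a$), the fixed-index requirement is satisfied uniformly. Thus the whole argument is essentially: regular $\Rightarrow$ every $a^2$ regular $\Rightarrow$ apply Proposition~\ref{twodeg} at $n = 2$.

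The main obstacle I anticipate is not the logical skeleton but rather being careful that Proposition~\ref{twodeg} genuinely applies at $n = 2$ and that nothing degenerates. In particular, one should check that the idempotent $e = aba$ indeed lies in $aRa$ (which is transparent) and that the nilpotency exponent is controlled by $2$ and not forced higher; this is exactly the direct inspection left to the reader in the proof of Proposition~\ref{twodeg}, which I would want to verify concretely for $n = 2$ to be sure the claimed index of $2$ is correct. A secondary subtlety is the edge behavior when $a$ itself is already idempotent or nilpotent, but these cases are harmless since the construction still produces a valid $e$ with $[a(1-e)]^2 = 0$.
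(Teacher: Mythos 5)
Your proposal is correct and follows exactly the paper's own route: the paper likewise notes that in a regular ring every power of every element (in particular $a^2$) is regular, and then invokes Proposition~\ref{twodeg} with $n=2$. Your additional remarks about uniformity of the bound $k=2$ and verifying $[a(1-e)]^2=0$ concretely for $e=aba$ are sensible diligence but do not change the argument, which matches the paper's one-line proof.
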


\begin{proof} In such rings each power of an arbitrary element is always a regular element, so that the claim follows immediately from the crucial Proposition~\ref{twodeg} by putting $n=2$.
\end{proof}

We continue with the second statement of this branch.

\begin{thm}\label{strpireg} Strongly $\pi$-regular rings are D-regularly nil clean.
\end{thm}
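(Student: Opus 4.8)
The plan is to deduce Theorem~\ref{strpireg} from the already-proved Proposition~\ref{twodeg}, which manufactures the required idempotent out of any regular power of exponent at least $2$. So let $R$ be strongly $\pi$-regular and fix an arbitrary $a\in R$. My first move is to invoke the classical fact recorded in the Introduction, namely that strongly $\pi$-regular rings are themselves $\pi$-regular (see \cite{A}). Consequently every element of $R$, and in particular every power of $a$, has a regular power, and it only remains to locate such a power of $a$ in the shape demanded by Proposition~\ref{twodeg}.

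The single point requiring care is the hypothesis $n\geq 2$ in Proposition~\ref{twodeg}. A bare appeal to $\pi$-regularity of $a$ might furnish only that $a$ itself is regular (exponent $1$), and in a general ring regularity of $a$ need not propagate to $a^2$; thus I cannot simply quote $\pi$-regularity at $a$. To bypass this entirely, I would apply $\pi$-regularity not to $a$ but to the element $a^2\in R$: there exists $k\in\mathbb{N}$ with $(a^2)^k=a^{2k}\in a^{2k}Ra^{2k}$, so that $a^{2k}$ is regular and the exponent $2k$ is automatically at least $2$.

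With this even regular power secured, I would feed $n:=2k\geq 2$ into Proposition~\ref{twodeg} applied to $a$, thereby obtaining an idempotent $e\in (aRa)\cap Id(R)$ with $a(1-e)\in Nil(R)$ (indeed of index at most $2k$). Since $a$ was arbitrary, this exhibits every element of $R$ as D-regularly nil clean, which is exactly the assertion; note that the integer $k$ depends on $a$, so no uniform index is claimed, in agreement with the theorem statement. I do not anticipate a genuine obstacle: the entire substance already resides in Proposition~\ref{twodeg}, and the only subtlety is the exponent bound $n\geq 2$, which the passage to $a^2$ resolves cleanly. (A heavier alternative would be to invoke the Azumaya--Dischinger structure theory to produce $b$ commuting with $a$ and satisfying $a^n=a^{n+1}b$, then verify directly that $e=a^nb^n$ is idempotent, lies in $aRa$, and makes $[a(1-e)]^n=0$; but this reproves far more than is needed, so I would favour the short reduction above.)
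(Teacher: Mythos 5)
Your argument is correct, but it is genuinely different from the paper's. The paper proves Theorem~\ref{strpireg} directly from Azumaya's structure theorem: for $a$ in a strongly $\pi$-regular ring one may write $a^n=a^{2n}x=a^nxa^n$ with $ax=xa$, and then $e=a^nx$ is an idempotent lying in $aRa$ (since $e=e^2=a^nx^2a^n$) for which $[a(1-e)]^n=a^n(1-a^nx)^n=0$, the commutativity of $x$ with $a$ doing all the work --- this is precisely the ``heavier alternative'' you sketched and set aside. Your route instead factors through Proposition~\ref{twodeg}: strong $\pi$-regularity implies $\pi$-regularity by \cite{A}, and applying $\pi$-regularity to the element $a^2$ rather than to $a$ yields a regular power $a^{2k}$ whose exponent $2k$ is automatically at least $2$, which is exactly the hypothesis Proposition~\ref{twodeg} requires; this correctly disposes of the only delicate point, the bound $n\geq 2$. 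Comparing the two: the paper's proof is self-contained and exhibits the index as the strong $\pi$-regularity exponent, while yours is shorter and, more importantly, proves a strictly stronger statement, since it never uses ``strongly'' --- it shows that \emph{every} $\pi$-regular ring is D-regularly nil clean. That conclusion would answer Problem~\ref{15} of the concluding section in the negative, and it also shows that the element contemplated right after that problem (a regular element $a$ of a $\pi$-regular ring none of whose powers $a^k$, $k>1$, is regular) cannot exist, since $\pi$-regularity applied to $a^2$ always produces such a power. Your proof does lean entirely on Proposition~\ref{twodeg}, whose nilpotency computation the paper leaves to the reader; it does check out (with $b=ba^nb$, $e=aba^{n-1}$ and $g=ba^n$ one has $ea=ag$, hence $(1-e)a=a(1-g)$, and $(1-g)a(1-g)=a(1-g)$ because $a^n(1-g)=0$, so $[a(1-e)]^n=a[a(1-g)]^{n-1}(1-e)=a^n(1-g)(1-e)=0$), so granting that proposition, your reduction stands and in fact strengthens the theorem.
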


\begin{proof} For an arbitrary $a\in R$, where $R$ is strongly $\pi$-regular, we may write in accordance with \cite{A} (see \cite{N3} too) that $a^n=a^{2n}x=a^nxa^n$ for some $n\in \mathbb{N}$ and some $x\in R$ with $xa=ax$. So, $a^nx=e\in Id(R)$ and by squaring we deduce that $e=a^{2n}x^2=a^nx^2a^n\in aRa$. Furthermore, $a(1-e)=a(1-a^nx)\in Nil(R)$ since

$$
[a(1-e)]^n=[a(1-a^nx)]^n=a^n(1-a^nx)^n=a^n(1-a^nx)(1-a^nx)^{n-1}=
$$

$$
= (a^n-a^{2n}x)(1-a^nx)^{n-1}=0,
$$

\medskip

\noindent as expected.
\end{proof}

As a valuable consequence, we yield the following partial answer to \cite[Question 3.17]{KYZ}:

\begin{cor} Let $R$ be a ring and let $n\geq 2$ be a fixed positive integer. If all elements of $R$ satisfy the equation $x^n-x\in Nil(R)$, then $R$ is D-regularly nil clean.
\end{cor}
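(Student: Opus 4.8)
The plan is to show that any ring $R$ in which every element satisfies $x^n-x\in Nil(R)$ (with $n\ge 2$ fixed) is strongly $\pi$-regular, and then to invoke Theorem~\ref{strpireg} to conclude that $R$ is D-regularly nil clean. Since strong $\pi$-regularity of an element is an internal property whose witnesses may be sought inside the commutative subring generated by that element, I would fix $a\in R$ and pass to the commutative ring $S=\mathbb{Z}[a]$. There $f:=a^n-a$ is a central nilpotent of $S$, say $f^k=0$, so that $a^n=a+f$ with $af=fa$; in particular $f\in Nil(S)\subseteq Nil(R)$.

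The engine of the argument is the observation that $g:=a^{n-1}$ is idempotent modulo the nilradical of $S$. Indeed $g^2=a^{2n-2}=a^{n-2}a^n=a^{n-2}(a+f)=g+a^{n-2}f$, and $a^{n-2}f$ is nilpotent, whence $g^2-g\in Nil(S)$. Because idempotents lift modulo the nil ideal $Nil(S)$ in the commutative ring $S$, and the lift may be taken inside $gS$, I obtain a genuine idempotent $e\in S$ with $e\equiv g\pmod{Nil(S)}$ and $e=gh_0$ for some $h_0\in S$. Squaring, $e=e^2=g^2h_0^2=a^{2n-2}h$ with $h:=h_0^2$. Since $n\ge 2$ forces $2n-2\ge 2$, this exhibits $e\in aSa\subseteq aRa\cap Id(R)$; moreover $ea\equiv ga=a^n\equiv a\pmod{Nil(S)}$ gives $a(1-e)=(1-e)a\in Nil(S)\subseteq Nil(R)$.

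At this point $e\in aRa\cap Id(R)$ with $a(1-e)\in Nil(R)$ already meets Definition~\ref{1} directly, but to route the conclusion through Theorem~\ref{strpireg} I would record the resulting Fitting-type decomposition $S=eS\times(1-e)S$. On $(1-e)S$ the image $(1-e)a$ of $a$ is nilpotent (being $a-ea\in Nil(S)$ and commuting with $e$), so $(1-e)a^j=((1-e)a)^j=0$ for $j$ large; fix such a $k$, so $a^k=ea^k$. On $eS$ the relation $e=a^{2n-2}h$ shows that $ea$ is a unit with inverse $v:=ea^{2n-3}h$, since $va=ea^{2n-2}h=e^2=e$ (here $2n-3\ge 0$ again uses $n\ge 2$). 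Hence $a^k=ea^k=(va)a^k=va^{k+1}$ with $v\in S$ commuting with $a$, so $a^k\in a^{k+1}R\cap Ra^{k+1}$ and $a$ is strongly $\pi$-regular. As $a$ was arbitrary, $R$ is strongly $\pi$-regular, and Theorem~\ref{strpireg} finishes the proof. (Alternatively, the same decomposition shows each $a^k$ is von Neumann regular, so Proposition~\ref{twodeg} could be applied in place of the theorem.)

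The main obstacle is exactly the strong $\pi$-regularity claim, and within it two points deserve care. First, the lifted idempotent must genuinely land in $aRa$: this is precisely where the hypothesis $n\ge 2$ enters, since one needs the two spare factors of $a$ supplied by $a^{2n-2}$ (and the factor $a^{2n-3}$ when inverting $ea$). Second, every witness — the idempotent $e$, the auxiliary $h$, and the inverse $v$ — must be kept inside the commutative subring $\mathbb{Z}[a]$, so that commutativity of $R$ is never assumed and the elementwise conclusion transfers verbatim to the possibly non-commutative $R$. The idempotent-lifting step itself is routine once $g^2-g$ has been identified as nilpotent.
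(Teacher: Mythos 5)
Your proof is correct and takes essentially the same route as the paper: both reduce the corollary to showing that $R$ is strongly $\pi$-regular and then invoke Theorem~\ref{strpireg}. The only difference is that the paper dismisses the strong $\pi$-regularity of $R$ as ``not too hard to verify,'' whereas you supply the full verification (idempotent lifting inside the commutative subring $\mathbb{Z}[a]$, plus the Fitting-type splitting), which is a sound filling-in of exactly that step.
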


\begin{proof} It is not too hard to verify that such a ring $R$ is strongly $\pi$-regular, and so Proposition~\ref{strpireg} applies to get the claim.
\end{proof}

Now, we will treat the case of vector spaces which results could be of independent interest and which will be also applied in the examples stated below.

\begin{lem}\label{vs} Let $V$  be a vector space over an arbitrary field $K$, let $R=End_K(V)$ whose elements are being written to the left of elements of $V$, and let $a\in R$. Then there exists an idempotent $e\in aRa$ such that $(a(1-e))^2=0$.
\end{lem}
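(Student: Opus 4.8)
The plan is to read this lemma as the $\operatorname{End}_K(V)$-instance of Proposition~\ref{twodeg} taken at $n=2$, so that everything reduces to one linear-algebraic fact: the single operator $a^2$ is a regular element of $R=End_K(V)$, and moreover admits a \emph{reflexive} generalized inverse. Concretely, I would aim to produce $b\in R$ with
$$a^2ba^2=a^2 \qquad\text{and}\qquad ba^2b=b.$$
Granting this, Lemma~\ref{reciproc} is available as a fallback to manufacture reflexivity from plain regularity, and then the idempotent $e:=aba$ is forced to lie in $aRa\cap Id(R)$ with $(a(1-e))^2=0$, by exactly the identity recorded in the proof of Proposition~\ref{twodeg}. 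Thus the real content is confined to the construction of $b$, and the algebraic bookkeeping afterwards is routine.

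The substantive step, where the vector-space hypothesis is actually used, is the explicit building of $b$. First I would choose a complement $U$ of $\ker(a^2)$, so that $V=U\oplus\ker(a^2)$ and the restriction $a^2|_U\colon U\to a^2(V)$ is a bijection onto the image. Next I would choose a complement $Z$ of $a^2(V)$, so that $V=a^2(V)\oplus Z$. I then define $b\in End_K(V)$ by setting $b$ equal to the inverse $(a^2|_U)^{-1}$ on $a^2(V)$ (with values landing in $U$) and equal to $0$ on $Z$, extended linearly. Checking the two displayed identities is then a short verification on the two direct-sum decompositions: evaluating $a^2ba^2$ on $U\oplus\ker(a^2)$ recovers $a^2$, and evaluating $ba^2b$ on $a^2(V)\oplus Z$ recovers $b$, so $b$ is the desired reflexive inverse of $a^2$.

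With $b$ in hand I would finish by invoking the computation of Proposition~\ref{twodeg}: $e=aba$ satisfies $e^2=a(ba^2b)a=aba=e$, visibly $e\in aRa\cap Id(R)$, and $(a(1-e))^2=a^2-a^2ba^2=0$. I expect the only genuine obstacle to be the infinite-dimensional case, where the existence of the complements $U$ and $Z$ rests on the existence of bases (hence on the axiom of choice) rather than on a Fitting-type finiteness argument; once complements are granted, reflexivity of $b$ must be confirmed on the image summand so that the $ba^2b=b$ relation driving the index-$2$ nilpotency genuinely holds. I would also note the one-line alternative for context, namely that $End_K(V)$ is von Neumann regular and hence every element, in particular $a^2$, is regular, after which Theorem~\ref{reg} or Proposition~\ref{twodeg} applies directly; the explicit construction above is preferable here since it exhibits the idempotent $e$ concretely for use in the examples that follow.
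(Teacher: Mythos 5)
Your proof is correct, but it takes a genuinely different route from the paper's. You reduce the lemma to Proposition~\ref{twodeg} with $n=2$ by explicitly building a reflexive generalized inverse $b$ of $a^2$ from the two decompositions $V=U\oplus\ker(a^2)$ and $V=a^2(V)\oplus Z$, and then take $e=aba$; the key verification is $a(1-e)a=a^2-a^2ba^2=0$, which gives $(a(1-e))^2=0$ (your displayed identity silently drops a harmless right factor $1-e$, but the conclusion is unaffected). The paper instead works directly with $a$ rather than $a^2$: it splits $V=V_1\oplus V_2\oplus V_3\oplus V_4$ with $V_1=(\ker a)\cap(\operatorname{im} a)$, $V_2$ a complement of $V_1$ in $\operatorname{im} a$, $V_3$ a complement of $V_1$ in $\ker a$, and $V_4$ a complement of $\ker a+\operatorname{im} a$ in $V$; it then constructs $r\in R$ so that $e=ara$ is exactly the projection of $V$ onto $V_2$, and checks that $a(1-e)$ annihilates $V_1,V_2,V_3$ while mapping $V_4$ into $V_1\oplus V_2$, so the image of $a(1-e)$ lies in its kernel. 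What your approach buys is economy: it recycles the algebraic identities already proved (Lemma~\ref{reciproc}, Proposition~\ref{twodeg}), isolates the linear algebra in the single standard fact that an endomorphism of a vector space admits a reflexive inner inverse, and makes transparent that the lemma is just the endomorphism-ring instance of Theorem~\ref{reg}. What the paper's construction buys is finer structural information about the idempotent itself --- $e$ is the projection onto a complement of $\ker a\cap\operatorname{im} a$ inside $\operatorname{im} a$, with the action of $a(1-e)$ on each summand made explicit --- which is the concrete form later imitated in the approximate-idempotent lifting of Example~\ref{cur2}. Both arguments invoke the axiom of choice in the same way (existence of complements), so your concern about the infinite-dimensional case applies equally to the paper's proof and is resolved identically.
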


\begin{proof} Let $V_1=(ker a)\cap (im a)$, let $V_2$ be a complement of $V_1$ in $im a$, let $V_3$ be a complement of  $V_1$ in $ker a$, and let $V_4$ be a complement of $(ker a) \oplus (im a)$ in $V$.

Then, we elementarily see that the decomposition holds:

$$
(1) ~ V=V_1\oplus V_2 \oplus V_3 \oplus V_4.
$$

\medskip

Suppose now $B_1$, $B_2$, $B_3$, $B_4$ are the corresponding $K$-bases of $V_1$, $V_2$, $V_3$, $V_4$, respectively.

We claim that one can find an element $r\in R$ such that

$$
(2) ~ \forall ~ v\in B_2 : arav=v; \forall ~ v\in B_4 : rav=0.
$$

\medskip

\noindent To see this, note that in view of point (1), one has that $V_2\oplus V_4$  is disjoint from $ker a=V_1 \oplus V_3$, hence $a$ is one-to-one on the direct sum $V_2\oplus V_4$, whence it maps linearly independent elements of $V_2\oplus V_4$ to linearly independent elements of $V$. Therefore, one can choose an element of the ring $R$ which maps the elements $av$ for $v\in B_2\cup B_4$ to arbitrarily chosen elements of the space $V$. Now, since $V_2$ lies in $im a$, every $v\in V_2$ has a pre-image under the action of $a$ in $V$, and we thereby can choose $r$ to map each
element $av$ ($v\in V_2$) to such a pre-image, whereas mapping the elements $av$ for $v\in B_4$ to $0$. (We let it behave arbitrarily on a complement to the subspace that these elements span.)  Thus, (2) will hold.

Note that $ara$ annihilates elements of both $B_1$ and $B_3$, because these lie in $ker a$; and by the second condition of (2) it also annihilates elements of $B_4$;  while by the first condition of (2), it fixes elements of $B_2$. Hence

\medskip

\centerline{(3) ~ $ara$ is the projection $e$ of $V$ to the summand $V_2$ in the equality (1).}

\medskip

We now consider the element $a(1-e)$. By virtue of (3), this annihilates $V_2$, while acting on the other $V_i$ as $a$  does. By definition of the $V_i$'s, the element $a$ annihilates both $V_1$ and $V_3$, and sends $V_4$ into $im a$, the direct sum of $V_1$ and $V_2$, so we come to the conclusion that

\medskip

\centerline{(4) ~ $a(1-e)$ annihilates $V_1$, $V_2$, $V_3$ and sends $V_4$ into $V_1\oplus V_2$.}

\medskip

In particular, this shows that the image of $a(1-e)$ is contained in its kernel, thus proving the wanted equality $(a(1-e))^2=0$ after all.
\end{proof}

As an immediate consequence, we yield:

\begin{cor}\label{end} If $V_i$ ($i\in I$) are a family of vector spaces over fields $K_i$, and we let

$$
R=\prod_{i\in I} End_{K_i} V_i,
$$

\medskip

\noindent then, for every $a\in R$, there exists an idempotent $e\in aRa$ such that $(a(1-e))^2=0$.
\end{cor}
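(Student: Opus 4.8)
The plan is to reduce the product statement to the single-factor case, which is exactly Lemma~\ref{vs}. The key observation is that idempotency, membership in $aRa$, and nilpotency of the prescribed index all decompose coordinatewise across the direct product $R=\prod_{i\in I}\mathrm{End}_{K_i}V_i$. So I would work one coordinate at a time and then reassemble.

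First I would fix $a=(a_i)_{i\in I}\in R$, where each $a_i\in\mathrm{End}_{K_i}V_i$. Applying Lemma~\ref{vs} separately in each factor $\mathrm{End}_{K_i}V_i$ (a field is in particular an arbitrary field, so the lemma applies verbatim), I obtain for each $i$ an idempotent $e_i\in a_i\,\mathrm{End}_{K_i}(V_i)\,a_i$ with $(a_i(1-e_i))^2=0$. That is, there is $r_i\in\mathrm{End}_{K_i}V_i$ with $e_i=a_i r_i a_i$ and $e_i^2=e_i$. Now I would set $e:=(e_i)_{i\in I}$ and $r:=(r_i)_{i\in I}$, both elements of $R$.

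Next I would verify the three required properties of $e$ in $R$ by checking them componentwise. Idempotency: $e^2=(e_i^2)_i=(e_i)_i=e$, so $e\in Id(R)$. Membership in $aRa$: since $e_i=a_i r_i a_i$ for every $i$, we get $e=(a_i r_i a_i)_i=a r a\in aRa$. Nilpotency of index $2$: the identity element of $R$ is $1=(1_i)_i$, so $a(1-e)=(a_i(1-e_i))_i$ and hence $(a(1-e))^2=\big((a_i(1-e_i))^2\big)_i=(0)_i=0$. This delivers exactly the claimed conclusion.

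There is no serious obstacle here; the only point deserving a word of care is that the multiplication, the identity, and the idempotents in a direct product of rings are all computed coordinatewise, which is what makes the componentwise verification legitimate. In particular the single $r\in R$ assembled from the factorwise $r_i$ simultaneously witnesses $e\in aRa$ in every coordinate, so no uniformity across the index set $I$ is needed beyond the fact that Lemma~\ref{vs} is available in each factor. Thus the corollary follows immediately from Lemma~\ref{vs} applied in parallel across the factors.
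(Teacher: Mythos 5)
Your proof is correct and is exactly the argument the paper intends: the corollary is stated there as an ``immediate consequence'' of Lemma~\ref{vs}, namely the coordinatewise application of that lemma followed by componentwise verification of idempotency, membership in $aRa$, and $(a(1-e))^2=0$. Nothing is missing; your explicit assembly of $e=(e_i)_i$ and $r=(r_i)_i$ just spells out what the paper leaves tacit.
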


It was shown in \cite[Proposition 2.5]{D2} that regularly nil clean rings $R$ (and thus, in particular, D-regularly nil clean rings as well) are Utumi rings in the sense that, for every $x\in R$, there exists $y\in R$ depending on the element $x$ such that $x-x^2y \in Nil(R)$. It was also asked in the introductional section there are these Utumi's rings symmetric in the sense that $x-yx^2\in Nil(R)$. In what follows, we will give a positive solution to that question.

\begin{thm} The rings of Utumi are left-right symmetric.
\end{thm}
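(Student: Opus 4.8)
The plan is to reduce the whole statement to the single elementary observation that, for any ring and any $a,b\in R$, the product $ab$ is nilpotent if and only if $ba$ is nilpotent. This is immediate from the identity $(ba)^{k+1}=b\,(ab)^k\,a$: if $(ab)^k=0$, then $(ba)^{k+1}=0$, and symmetrically. I will call this the \emph{nilpotent-swap} principle. The reason for isolating it is that $Nil(R)$ is in general \emph{not} a two-sided ideal, so one cannot argue by passing to a quotient $R/Nil(R)$ or by freely substituting congruent elements; the swap principle is precisely the device that lets one transport factors from one side of a product to the other while staying inside $Nil(R)$.

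With this in hand the argument becomes a short chain of exact factorizations interleaved with swaps. I would fix $x\in R$ and take $y\in R$ to be the witness furnished by the (right-hand) Utumi condition, so that $x-x^2y\in Nil(R)$. Writing $x-x^2y=x(1-xy)$ and applying the swap principle gives $(1-xy)x=x-xyx\in Nil(R)$. Re-factoring the same element as $x-xyx=x(1-yx)$ and swapping once more yields $(1-yx)x=x-yx^2\in Nil(R)$. Thus the \emph{same} element $y$ already witnesses the symmetric condition, and the asserted left-right symmetry follows at once. Since every step in the chain is in fact an equivalence, one even obtains the sharper element-wise statement that $x-x^2y\in Nil(R)$ holds if and only if $x-yx^2\in Nil(R)$, for any fixed pair $x,y$.

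Because the computation is this direct, I do not anticipate a genuine obstacle in the derivation itself; the only conceptual hurdle is the tempting but illegitimate move of working modulo $Nil(R)$ as though it were an ideal. Recognizing that the correct replacement for ``reducing mod the nilpotents'' is the two-sided swap $ab\mapsto ba$, and that two such swaps separated by the trivial re-bracketing $x-xyx=x(1-yx)=(1-xy)x$ convert the right-hand witness into a left-hand one, is essentially the entire content of the proof.
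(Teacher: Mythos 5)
Your proposal is correct and follows essentially the same route as the paper's own proof: the paper's explicit exponent computations (passing from $(x-x^2y)^n=0$ to $(x-xyx)^{n+1}=0$ to $(x-yx^2)^{n+2}=0$) are precisely two applications of your nilpotent-swap principle $ab\in Nil(R)\Leftrightarrow ba\in Nil(R)$, separated by the same re-bracketing $x-xyx=(1-xy)x=x(1-yx)$. Isolating the swap as a lemma is a cleaner packaging, but the underlying argument, including the fact that the same witness $y$ works, is identical.
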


\begin{proof} Let $x\in R$ be an arbitrary element. Hence, by definition, there is $y\in R$ depending on $x$ such that $x-x^2y\in Nil(R)$. We claim that $x-yx^2\in Nil(R)$ which will show the desired symmetry. In fact, note firstly that for all $n\in \mathbb{N}$ one has that $(x-x^2y)^n=[x(1-xy)]^n=x(x-xyx)^{n-1}(1-xy)$. Thus, if $(x-x^2y)^n=0$, then one observes that $(x-xyx)^{n+1}=(1-xy)(x-x^2y)^nx=0$. Since we can analogously write that $(x-yx^2)^{n+2}=(1-yx)(x-xyx)^{n+1}x$, we elementarily see that this is zero too, i.e., $(x-yx^2)^{n+2}=0$, as required.
\end{proof}

Now, in order to support Definition~\ref{1}, we continue with a series of examples showing unambiguously the abundance and the complicated structure of D-regularly nil clean rings as well as that the converse implication in Theorem~\ref{strpireg} is not fulfilled.

\begin{ex}\label{cur1} There exists a D-regularly nil clean ring of index $2$, which is unit-regular, strongly clean and nil-clean, but is {\it not} strongly $\pi$-regular.
\end{ex}

\begin{proof} Consider the example of a nil-clean, unit-regular ring $R$ as constructed in \cite[Example 3.1]{S}, which ring is {\it not} strongly $\pi$-regular. In fact, it is of the form $R=\prod_{n=1}^{\infty} \mathbb{M}_n(\mathbb{Z}_2)$, and on p.345 from \cite{S} it was mentioned only that $R$ is regular, but this can be plainly strengthened to unit-regularity thus: Since $\mathbb{M}_n(\mathbb{Z}_2)$ is finite, and hence artinian, with zero Jacobson radical, it is both unit-regular and strongly $\pi$-regular (see \cite{E}). Now, it readily follows by coordinate-wise arguments that $R$ is also unit-regular, too, as claimed.

\medskip

Next, what we intend to show is that this ring is D-regularly nil clean, which will be substantiated by illustrating of two different ideas:

\medskip

\noindent{\bf First Approach.}  This follows at once from Theorem~\ref{reg}, as asserted.

\medskip

\noindent{\bf Second Approach.} In particular to Corollary~\ref{end}, this holds when the index set $I$ is the set of all natural numbers, and each $V_i=K^i=K\times \cdots \times K$ ($i$ times) for a common field $K=K_i$, so that we will have $R=\prod_{i\in I} \mathbb{M}_i(K)$, taking into account the classical facts that $End_K(K)\cong K$ and that $End_K(K^i)\cong \mathbb{M}_i(K)$. If we now just substitute $K=\mathbb{Z}_2$, the initial statement will be true, as desired, and the proof of the example is completed.
\end{proof}

\begin{rem}\label{new} We have proven even something more than we have claimed, namely, for any positive integer $n$ and any field $K$, the ring $\mathbb{M}_n(K)$ and its infinite direct product on $n$, where $n$ ranges over all naturals $\mathbb{N}$, are D-regularly nil clean of index $2$; when $n=0$ we just have at once that the index is $1$ for the field $K$. However, still left-open remain the question of whether or not $\mathbb{M}_n(R)$ is D-regularly nil clean for any $n\geq 1$, whenever $R$ is D-regularly nil clean (the reverse was obtained in Proposition~\ref{corner}).

In that aspect, combining a series of principally known facts, we may collect the following establishments (see, for more information, also \cite{R}): 

\medskip

$\bullet$ ~ it was proved in \cite{Lam} that, for any $n\in \mathbb{N}$ and any ring $R$, the ring $\mathbb{M}_n(R)$ is regular if, and only if, $R$ is regular.

\medskip

$\bullet$ ~ it was proved in \cite{He} that, for any $n\in \mathbb{N}$ and any ring $R$, the ring $\mathbb{M}_n(R)$ is unit-regular if, and only if, $R$ is unit-regular.

\medskip

$\bullet$ ~ it was proved in \cite{BDD} that, for any $n\in \mathbb{N}$ and any commutative ring $R$, the ring $\mathbb{M}_n(R)$ is strongly $\pi$-regular if, and only if, $R$ is strongly $\pi$-regular.

\medskip

It is still unknown whether these equivalencies hold for noncommutative (strongly) $\pi$-regular rings. The situation with matrix rings over noncommutative $\pi$-regular rings seems to be rather complicated. 

\medskip

$\bullet$ ~ it was proved in \cite{N2} that, for any $n\in \mathbb{N}$ and any ring $R$, the ring $\mathbb{M}_n(R)$ is exchange if, and only if, $R$ is exchange.

\medskip

$\bullet$ ~ it was proved in \cite{HN} that, for any $n\in \mathbb{N}$ and any ring $R$, the ring $\mathbb{M}_n(R)$ is clean if, and only if, $R$ is clean.

\medskip

For the case of strongly clean rings, this equivalence is no longer fulfilled, however (see, for consultation, \cite{BDD}).
\end{rem}

The last construction in the Example~\ref{cur1} above can be improved a bit more to the following one:

\begin{ex}\label{cur2} There exists a D-regularly nil clean ring of index at most $4$, which is both nil-clean and strongly clean, but is {\it not} $\pi$-regular.
\end{ex}

\begin{proof} We put into consideration the ring $R=\prod_{n=1}^{\infty} \mathbb{M}_n(\mathbb{Z}_4)$ from \cite[Example 3.2]{S}. It was established there that it is nil-clean but not $\pi$-regular. According to Remark~\ref{new}, $\mathbb{M}_n(\mathbb{Z}_4)$ is strongly $\pi$-regular as so is $\mathbb{Z}_4$. Consequently, $R$ is strongly clean (see, e.g., \cite{N3}).

\medskip

Further, what needs to prove is that $R$ is D-regularly nil clean, which we will show by demonstrating two independent methods: 

\medskip

\noindent{\bf First Approach.} As showed in Example 3.2 of \cite{S} there is a nil-ideal $I\subseteq J(R)$ such that $R/I$ is (unit-)regular being isomorphic to the ring $R=\prod_{n=1}^{\infty} \mathbb{M}_n(\mathbb{Z}_2)$ from the preceding Example~\ref{cur1}. We, therefore, apply Lemma~\ref{reduct} to get our pursued claim.

\medskip

\noindent {\bf Second Approach.} To that goal, by utilizing the next trick, we shall reduce the required proof to that of the previous example. Specifically, the following preliminary technical claim is true:

\begin{lem}\label{cr} Let $A$ be a commutative ring containing an element $\epsilon$ such that $\epsilon^2=0$, and let $R$ be an $A$-algebra containing an element $e$ which is an "approximate idempotent" in the sense that the following equality is fulfilled:

$$
(*) ~  e^2-e=\epsilon b, ~ \exists b\in R.
$$

\medskip

Then the element $e'=e-2e(\epsilon b)+\epsilon b$ is, in fact, an idempotent in $R$.
\end{lem}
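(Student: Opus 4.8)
The plan is to reduce everything to a single square-zero element and to exploit two elementary commutativity facts that come from its two different representations. Put $n := \epsilon b$, so that the defining equality $(*)$ reads $n = e^2 - e$. I would first record two properties of $n$. On the one hand, since $R$ is an $A$-algebra the image of $A$ lies in the center $C(R)$, so $\epsilon$ is central in $R$; hence $n^2 = \epsilon b\,\epsilon b = \epsilon^2 b^2 = 0$. On the other hand, reading $n$ as $e^2 - e$, a polynomial in $e$, shows that $n$ commutes with $e$, i.e. $en = ne$.

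With these two facts at hand, I would rewrite the candidate element as $e' = e - 2e(\epsilon b) + \epsilon b = e + n - 2en = e + n(1-2e)$. The remaining task is the direct verification that $(e')^2 = e'$. Writing $e' = e + m$ with $m = n(1-2e)$, the element $m$ again commutes with $e$, so the cross terms collapse to $em + me = 2en(1-2e)$, while $m^2 = n^2(1-2e)^2 = 0$ because $n^2 = 0$. This yields $(e')^2 = e^2 + 2en(1-2e)$. Substituting $e^2 = e + n$ and using $ene = e^2 n = (e+n)n = en$ (here $n^2 = 0$ is again used) simplifies $2en(1-2e)$ to $-2en$, so the whole expression collapses to $e + n - 2en = e'$, as desired.

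The computation is routine once the set-up is fixed, so I do not anticipate a genuine obstacle; the only point requiring care is to keep track of the provenance of each relation. The vanishing $n^2 = 0$ must be taken from the representation $n = \epsilon b$ together with the centrality of $\epsilon$, whereas the commutation $en = ne$ must be taken from the representation $n = e^2 - e$. Separating these two derivations is exactly what makes the short formula $e' = e + n(1-2e)$ produce an honest idempotent. This is, in effect, the classical first-order idempotent-lifting formula specialized to a square-zero error term, which is what makes it useful for lifting idempotents from a quotient by the nil ideal in Example~\ref{cur2} back to $R$.
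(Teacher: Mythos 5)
Your proof is correct and follows essentially the same route as the paper's: both rest on the two observations that $\epsilon b=e^2-e$ commutes with $e$ and that $(\epsilon b)^2=\epsilon^2b^2=0$ by centrality of $\epsilon$, after which everything reduces to a computation in the commutative subalgebra generated by $e$ and $\epsilon b$. The only difference is that you carry out explicitly the verification $(e')^2=e'$ which the paper declares ``really straightforward'' and omits.
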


\begin{proof} Note that under the presence of (*), the element $\epsilon b$ will commute with $e$, which can be routinely verified by a direct check. Hence the condition (*) and the equality above for $e'$ can be regarded as equations in the commutative $A$-subalgebra of $R$, generated by $e$ and $\epsilon b$. The calculation showing that $e'^2-e'=0$, based on the truthfulness of the equalities stated above, is now really straightforward.
\end{proof}

Now, we manage to prove the whole example. So, given any commutative ring $A$ with a square-zero element $\epsilon$ such that the factor-ring $A/\epsilon A$ is a field (e.g., such as $\mathbb{Z}_4$ with $\epsilon=2$), if we let $R$ be the endomorphism algebra of a free $A$-module $M$, then $R/\epsilon R$ will surely be the endomorphism ring of the ($A/\epsilon$)-vector space $M/\epsilon M$, and so given $a\in R$, we can successfully apply the internal Lemma~\ref{vs} to the image of $a$ in $R/\epsilon R$ to get an appropriate idempotent there.  Therefore, Lemma~\ref{cr} allows us to approximate this by an idempotent $e$ of $R$, which will approximately have the desired property, namely that the square $(a(1-e))^2$ will lie in $\epsilon R$. This will give that $(a(1-e))^4=0$, as required, completing the proof of the example.
\end{proof}

\section{Concluding Discussion and Open Questions}

In closing, we state into consideration the following five questions of some interest and importance to the subject:

\medskip

It is pretty easy to infer that finite direct products of (strongly) $\pi$-regular rings are (D-)regularly nil clean (it cannot be expected that these finite direct products will be again (strongly) $\pi$-regular even in the case of two direct components -- see, e.g., \cite{T}). However, in regard to Examples~\ref{cur1} and \ref{cur2}, one may ask the following:

\begin{prob} Does it follow that an infinite direct product of (strongly) $\pi$-regular rings is a (D-)regularly nil clean ring?
\end{prob}

The solution to the next three queries will increase all the picture about the relationships between the classes of rings examined above.

\begin{prob} Does there exist a D-regularly nil clean ring which is {\it not} strongly clean (and even not clean)?
\end{prob}

In regard to Theorem~\ref{reg}, one may state the following query:

\begin{prob}\label{15} Does there exist a $\pi$-regular ring which is {\it not} D-regularly nil clean?
\end{prob}


Perhaps does there exist a $\pi$-regular ring $R$ in which there is a regular element $a\in R$ such that $a^k$ is not regular for any $k>1$. Such an element, if it eventually exists, should not be unit, nor idempotent, nor nilpotent. On the same vein, such a ring, if it eventually exists, must have an unbounded index of nilpotence (for otherwise, it should be strongly $\pi$-regular by virtue of \cite{A} and so D-regularly nil clean according to Theorem~\ref{strpireg}).

However, at present, such a construction is not known to the author.

\medskip

Strengthening \cite[Problem 3.4]{D2}, one may ask

\begin{prob} Does there exist a nil-clean ring which is {\it not} D-regularly nil clean?
\end{prob}

In \cite{K} was proved the remarkable fact that in regular rings all strongly $\pi$-regular elements (and, in particular, all nilpotent elements) are always unit-regular. In addition, if a ring is simultaneously regular and strongly $\pi$-regular, then it is unit-regular.

\medskip

So, as a final query, it is reasonably adequate to be searching for the following expansion of the last affirmation in case that Problem~\ref{15} holds in the affirmative:

\begin{prob} In $\pi$-regular rings are all D-regular nil clean elements strongly $\pi$-regular? In addition, is a D-regularly nil clean ring strongly $\pi$-regular, provided that it is $\pi$-regular?
\end{prob}

\noindent{\bf Acknowledgements.} The author would like to express his sincere appreciation to Professor George M. Bergman from Berkeley, California, to Professor Kenneth R. Goodearl from Santa Barbara, California, to Professor Zachary Mesyan from Colorado, Colorado Springs and to Dr. Nik Stopar from Ljubljana, Slovenia, for making up possible a valuable discussion on the present topic.

\vskip3pc


\begin{thebibliography}{99}

\bibitem{A}
G. Azumaya, {\it Strongly $\pi$-regular rings}, J. Fac. Sci. Hokkaido Univ. (Ser. I, Math.) \textbf{13} (1954), 34--39.

\bibitem{BDD}
G. Borooah, A.J. Diesl and T. Dorsey, {\it Strongly clean matrix rings over commutative local rings}, J. Pure \& Appl. Algebra \textbf{212} (2008), 281--296.

\bibitem{CK}
V.P. Camillo and D. Khurana, {\it A characterization of unit regular rings}, Commun. Algebra \textbf{29} (2001), 2293--2295.

\bibitem{CY}
V.P. Camillo and H.-P. Yu, {\it Exchange rings, units and idempotents}, Commun. Algebra \textbf{22} (1994), 4737-�
4749.

\bibitem{D0}
P.V. Danchev, {\it Generalizing nil clean rings}, Bull. Belg. Math. Soc. (Simon Stevin) \textbf{25} (2018), 13--28.

\bibitem{D1}
P.V. Danchev, {\it Left-right cleanness and nil cleanness in unital rings}, Bull. Irkutsk State Univ., Ser. Math. {\bf 27} (2019), 28--35.

\bibitem{D2}
P.V. Danchev, {\it A generalization of $\pi$-regular rings}, Turk. J. Math. \textbf{43} (2019), 702--711.

\bibitem{DL}
P.V. Danchev and T.-Y. Lam, {\it Rings with unipotent units}, Publ. Math. Debrecen \textbf{88} (2016), 449--466.

\bibitem{DS}
P.V. Danchev, J. \v{S}ter, {\it Generalizing $\pi$-regular rings}, Taiwanese J. Math. \textbf{19} (2015), 1577--1592.

\bibitem{D}
A.J. Diesl, {\it Nil clean rings}, J. Algebra \textbf{383} (2013), 197--211.

\bibitem{E}
G. Ehrlich, {\it Unit-regular rings}, Portugal. Math. \textbf{27} (1968), 209--212.

\bibitem{G}
K.R. Goodearl, von Neumann Regular Rings, Monographs and Studies in Mathematics, vol. \textbf{4}, Pitman (Advanced
Publishing Program), Boston, Massachussetts, 1979; 2nd ed., Robert E. Krieger Publishing Co., Inc., Malabar, FL, 1991.

\bibitem{HN}
J. Han and W.K. Nicholson, {\it Extensions of clean rings}, Commun. Algebra \textbf{29} (2001), 2589--2595.

\bibitem{H}
D. Handelman, {\it Perspectivity and cancellation in regular rings}, J. Algebra \textbf{48} (1977), 1--16.

\bibitem{He}
M. Henriksen, {\it On a class of regular rings that are elementary divisor rings}, Arch. Math. (Basel) \textbf{24} (1973), 133--141.

\bibitem{GW}
K.R. Goodearl and R.B. Warfield, Jr., {\it Algebras over zero-dimensional rings}, Math. Ann. \textbf{223} (1976), 157�-168.

\bibitem{K}
D. Khurana, {\it Unit-regularity of regular nilpotent elements}, Algebras \& Represent. Theory \textbf{19} (2016), 641--644.

\bibitem{KLN}
D. Khurana, T.-Y. Lam, P.P. Nielsen, {\it Two-sided properties of elements in exchange rings}, Algebras \& Represent. Theory \textbf{18} (2015), 931--940.

\bibitem{KWZ}
M.T. Ko\c{s}an, Z. Wang and Y. Zhou, {\it Nil-clean and strongly nil-clean rings}, J Pure \& Appl. Algebra \textbf{220} (2016), 633--646.

\bibitem{KYZ}
M.T. Ko\c{s}an, T. Yildirim and Y. Zhou, {\it Rings with $x^n-x$ nilpotent}, J. Alg. \& Appl. \textbf{19} (2020).

\bibitem{L}
T.-Y. Lam, A First Course in Noncommutative Rings, Second Edition, Graduate Texts in Mathematics, vol. \textbf{131}, Springer-Verlag, Berlin-Heidelberg-New York, 2001.

\bibitem{Lam}
T.-Y. Lam, Exercises in Classical Ring Theory, Problem Books in Mathematics, Second Edition, Springer-Verlag New York, 2003.

\bibitem{N1}
W.K. Nicholson, {\it Rings whose elements are quasi-regular or regular}, Aequat. Math. \textbf{9} (1973), 64--70.

\bibitem{N2}
W.K. Nicholson, {\it Lifting idempotents and exchange rings}, Trans. Amer. Math. Soc. \textbf{229} (1977), 269--278.

\bibitem{N3}
W.K. Nicholson, {\it Strongly clean rings and Fitting's lemma}, Comm. Algebra \textbf{27} (1999), 3583--3592.

\bibitem{NS}
P.P. Nielsen and J. \v{S}ter, {\it Connections between unit-regularity, regularity, cleanness, and strong cleanness of elements and rings}, Trans. Amer. Math. Soc. \textbf{370} (2018), 1759--1782.

\bibitem{R}
L. H. Rowen, Polynomial Identities in Ring Theory, Pure and Applied Mathematics, vol. \textbf{84}, Academic Press, Inc. (Harcourt Brace Jovanovich, Publishers), New York-London, 1980.

\bibitem{S0}
J. \v{S}ter, {\it Corner rings of a clean ring need not be clean}, Commun. Algebra \textbf{40} (2012), 1595--1604.

\bibitem{S}
J. \v{S}ter, {\it On expressing matrices over $\Z_2$ as the sum of an idempotent and a nilpotent}, Lin. Alg. \& Appl. \textbf{544} (2018), 339--349.

\bibitem{Sto}
J. Stock, {\it On rings whose projective modules have the exchange property}, J. Algebra \textbf{103} (1986), 437--453.

\bibitem{T}
A. Tuganbaev, Rings Close to Regular, Mathematics and Its Applications, vol. \textbf{545}, Kluwer Academic Publishers, Dordrecht, 2002.

\bibitem{W}
R.B. Warfield, Jr., {\it Exchange rings and decompositions of modules}, Math. Ann. \textbf{199} (1972), 31-�36.

\end{thebibliography}
\end{document}